\newtheorem{theorem}{Theorem}[section]
\newenvironment{proof}{\noindent {\bf Proof:\ }}{{\quad $\Box$ \medbreak}}
\title{Odd Holes in Bull-Free Graphs}
\author{Maria Chudnovsky
\thanks{Partially supported by NSF grant DMS-1550991 and by US Army Research 
Office grant W911NF-16-1-0404.}\\
Princeton University, Princeton, NJ 08544, USA
\and
Vaidy Sivaraman\\
Binghamton University, Binghamton, NY 13902, USA
}
\begin{document}
\maketitle
\begin{abstract}
The complexity of testing whether a graph contains an induced odd cycle of length at least five is currently unknown. In this paper we show that this can be done in polynomial time if the input graph has no induced subgraph isomorphic to the bull (a triangle with two disjoint pendant edges). 
\end{abstract}

\section{Introduction}

All graphs in this paper are finite and simple. 
The complement $G^c$ of $G$ is the graph with vertex set $V(G)$ and such that two vertices are adjacent in $G^c$ if and only if they are non-adjacent in $G$. 
For two graphs $H$ and $G$, $H$ is an {\em induced subgraph} of $G$ if 
$V(H) \subseteq V(G)$, and a pair of vertices $u,v \in V(H)$ is adjacent if and only if it is adjacent in $G$. We say that $G$ {\em contains} $H$ if $G$ has an induced subgraph isomorphic to $H$. If $G$ does not contain $H$, we say that $G$
is {\em $H$-free}. For a set $X \subseteq V(G)$ we denote by 
$G[X]$ the induced subgraph of $G$ with vertex set $X$. 
A path $P$ in a graph is a sequence $p_1-\ldots-p_k$ (with $k \geq 1$) of distinct vertices such that $p_i$ is adjacent to $p_j$ if and only if $|i-j|=1$. We say that the {\em length} of this path is $k-1$. We call $p_1$ and $p_k$ the {\em ends} of $P$, and write $P^*=V(P) \setminus \{p_1,p_k\}$.
A {\em hole} in a graph is an induced subgraph that is isomorphic to  the cycle $C_k$ with $k\geq 4$, and $k$ is the {\em length} of the hole. A hole is {\em odd} if $k$ is odd, and {\em even} otherwise. The vertices of a hole can be 
numbered $c_1, \ldots, c_k$ such that $c_i$ is adjacent to $c_j$ if and only if
$|i-j| \in \{1,k-1\}$; sometimes we write $C=c_1-\ldots-c_k-c_1$. 
An {\em antihole} in a  graph is an induced subgraph that is isomorphic to  
$C_k^c$ with $k\geq 4$, and again $k$ is the {\em length} of the antihole. 
Similarly, an  antihole is {\em odd} if $k$ is odd, and {\em even} otherwise.
The {\em bull} is the graph consisting of a triangle with two disjoint pendant edges. A graph is {\em bull-free} if no induced subgraph of it is isomorphic to the bull. 

A graph $G$ is called {\em perfect} if for every 
induced subgraph $H$ of $G$, $\chi(H) = \omega (H)$; and {\em Berge} if
it has no odd holes and no odd antiholes. In \cite{Bergealg} it 
was shown that:

\begin{theorem}
\label{Berge}
There is  an algorithm  that tests if an input graph $G$ is Berge in time
$O(|V(G)|^9)$.
\end{theorem}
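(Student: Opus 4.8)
The plan is to reduce the problem to detecting odd holes and then to detect odd holes by a \emph{cleaning} argument. First, note that an odd antihole in $G$ is precisely an odd hole in $G^c$, and $G^c$ can be computed in time $O(|V(G)|^2)$; hence it suffices to design a subroutine that tests for an odd hole in a given graph and to run it on both $G$ and $G^c$. The whole difficulty is therefore concentrated in odd-hole detection, which I do not attempt to solve in full generality (indeed its complexity is the open problem motivating this paper); instead I exploit the fact that, for the recognition problem, I only need a correct yes/no answer on each of $G$ and $G^c$.

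The core tool is the notion of a \emph{clean} hole: call a vertex $v \notin V(C)$ \emph{major} with respect to a hole $C$ if its neighbours on $C$ do not all lie on a single three-vertex subpath of $C$, and call $C$ clean if it has no major vertex. The first key step is to show that a \emph{clean shortest odd hole} can be found in polynomial time: picking a suitable constant number of equally spaced anchor vertices on such a hole and joining consecutive anchors by shortest paths recovers the hole, so a search over anchor tuples combined with shortest-path computations succeeds. The remaining task is \emph{cleaning}: given that $G$ has an odd hole, exhibit a vertex set $X$ whose deletion leaves some shortest odd hole intact but free of major vertices. Since the offending major vertices are not known in advance, I would instead produce a polynomially bounded family of candidate sets (``near-cleaners'') and run the clean-hole detector once for each member of the family, outputting ``odd hole found'' if any call succeeds.

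The heart of the proof, and the main obstacle, is the combinatorial lemma guaranteeing that such a small family of near-cleaners suffices. This fails in the presence of a few exceptional configurations, so the algorithm must first test $G$ (and $G^c$) for these directly: a \emph{pyramid} (a triangle joined to an apex by three otherwise disjoint paths), a \emph{jewel}, and a short list of further configurations, each of which is either small enough to find by brute force or admits its own dedicated polynomial detection routine. Designing the pyramid detector and proving the structural dichotomy --- that once these configurations are excluded every shortest odd hole is ``amenable,'' i.e.\ cleaned by some member of the polynomial near-cleaner family --- is where essentially all the work lies; the anchor-and-shortest-path and complementation steps are comparatively routine.

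Finally I would assemble the pieces and account for the running time. The dominant contributions are the pyramid-detection routine and the cleaning loop, in which a family of roughly $O(|V(G)|^6)$ near-cleaners is each tested by an $O(|V(G)|^3)$ clean-hole search; both are engineered to fit within $O(|V(G)|^9)$, and running everything on $G$ and on $G^c$ only doubles the constant. I expect the delicate part of the analysis to be verifying the amenability dichotomy and keeping the near-cleaner family polynomial, rather than the complexity bookkeeping itself.
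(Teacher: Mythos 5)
The first thing to say is that the paper does not prove this statement at all: Theorem \ref{Berge} is quoted as a black box from \cite{Bergealg} (Chudnovsky, Cornu\'{e}jols, Liu, Seymour, Vu\v{s}kovi\'{c}, ``Recognizing Berge Graphs''), and the present paper only uses it indirectly, through the weaker Theorem \ref{shortestodd} which is also cited rather than proved. So there is no internal proof to compare yours against. What you have written is, in outline, a faithful reconstruction of the strategy of that reference: complementation to handle odd antiholes, detection of pyramids, jewels and a short list of other configurations, then cleaning via a polynomially bounded family of near-cleaners, followed by detection of a clean shortest odd hole by guessing a bounded number of vertices on it and joining them by shortest paths. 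As a summary of \cite{Bergealg}, this is accurate.

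As a proof, however, it has genuine gaps, which you partly flag yourself. The amenability dichotomy (once the configurations are excluded, every shortest odd hole is cleaned by some member of a polynomial family of near-cleaners), the construction of that family, and the polynomial pyramid-detection routine are each stated as tasks rather than proved, and together they constitute essentially the entire content of the theorem; in the original paper they occupy dozens of pages of argument. There is also a logical gap in the reduction itself: when the first phase finds a pyramid or a jewel, your algorithm must be entitled to halt with the answer ``not Berge,'' and this requires proving that each such configuration forces an odd hole or odd antihole (for a pyramid, for instance, two of the three paths have lengths of equal parity, so their union together with the corresponding base edge is an odd hole); without this observation, excluding the configurations is not a legitimate step in a recognition algorithm. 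Finally, your opening framing --- that it suffices to build an odd-hole detector and run it on $G$ and $G^c$ --- promises more than the method delivers: the cleaning machinery does not yield a general odd-hole detector (that problem is open, as the present paper emphasizes); it only answers correctly after the configurations have been excluded, and the correctness claim must be phrased in that conditional form.
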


However, the complexity of testing for an odd hole is still unknown. The main result of this paper is a solution of a special case of this problem, as follows.

\begin{theorem}
\label{main}
There is  an algorithm  that tests if an input bull-free graph $G$ contains an odd hole in time $O(|V(G)|^5)$.
\end{theorem}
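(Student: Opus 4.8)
The plan is to search \emph{directly} for a shortest odd hole, rather than to invoke the Berge test of Theorem~\ref{Berge}. The reason is that the Berge test cannot by itself isolate odd holes: since the bull is self-complementary, the class of bull-free graphs is closed under complementation, and a bull-free graph may contain an odd antihole while containing no odd hole at all. The smallest witness is $C_7^c$, which is bull-free, has no induced $C_5$, and (having only seven vertices, and not being isomorphic to $C_7$) contains no odd hole, yet is itself an odd antihole. Thus a ``not Berge'' verdict does not certify an odd hole, and we must look for induced odd cycles directly. Accordingly, assume $G$ contains an odd hole and fix a \emph{shortest} one $C = c_1 - \cdots - c_k - c_1$, with $k$ odd. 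If $k = 5$ we are done: an induced $C_5$, if present, can be found by examining all $\binom{|V(G)|}{5} = O(|V(G)|^5)$ five-element subsets, which meets the time bound. So the core of the argument is to locate a shortest odd hole of length $k \ge 7$ within the same budget.

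The engine is an analysis of how a vertex $v \notin V(C)$ can attach to $C$, driven by bull-freeness. Writing $N = N(v) \cap V(C)$, the basic observation is that $N$ cannot be a pair of \emph{consecutive} hole-vertices $\{c_i, c_{i+1}\}$ whose flanking vertices $c_{i-1}, c_{i+2}$ both lie outside $N$: in that case $\{v, c_i, c_{i+1}\}$ is a triangle while $c_{i-1}$ and $c_{i+2}$ are pendant neighbors of $c_i$ and $c_{i+1}$ respectively, so $\{c_{i-1}, c_i, c_{i+1}, c_{i+2}, v\}$ induces a bull (using $k \ge 5$). Hence every maximal run of consecutive neighbors of $v$ on $C$ has length $1$ or at least $3$. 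Minimality of $C$ imposes further constraints: two neighbors at cyclic distance $3$ with no neighbor strictly between them would, together with $v$, close into an induced $C_5$, a shorter odd hole; more generally, two neighbors whose shorter arc has even edge-length yield a shorter odd hole through $v$ unless that arc is ``blocked'' by additional neighbors of $v$. Pushing this case analysis — the heart of the proof — should show that no vertex is $C$-\emph{major} in the sense of obstructing the recovery of $C$, so that $C$ survives, clean, in a suitable induced subgraph.

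Given such cleanliness, a shortest odd hole of length $\ge 7$ can be recovered by a guess-and-search procedure. Guess three consecutive vertices $x - y - z$ of $C$ (there are $O(|V(G)|^3)$ choices); delete $y$ together with all of $N(y) \setminus \{x, z\}$ and any vertices shown above to be incompatible with lying on $C$; and then compute, in the remaining graph, a shortest induced $x$--$z$ path of the parity forced by $k$ being odd. In a clean instance such a path closes with $y$ into an induced odd hole, and a shortest one is found by a breadth-first computation in $O(|V(G)|^2)$ time per guess, for $O(|V(G)|^5)$ overall. Combined with the $C_5$ test of the first paragraph, this gives the claimed $O(|V(G)|^5)$ algorithm of Theorem~\ref{main}.

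The main obstacle is the cleaning step. One must verify, through the bull-free attachment analysis together with the minimality of $C$, that every potentially obstructing vertex is genuinely excluded, so that the guessed triple plus a shortest parity-correct path reconstructs an honest odd hole rather than a path carrying a chord or a shortcut through an undetected major vertex. Controlling these major vertices — and confirming that the requisite exclusions depend only on information local to the guessed triple, so as not to exceed the time budget — is precisely where the bull-freeness hypothesis does its essential work, and where the bulk of the case analysis will lie.
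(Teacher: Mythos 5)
Your proposal has a genuine gap, and it is exactly at the step you yourself flag as ``the heart of the proof.'' The claim that bull-freeness together with minimality of $C$ forces every shortest odd hole to be clean (equivalently, rules out $C$-major vertices) is false. Counterexample: let $G$ be $C_7$ plus a hub vertex $v$ complete to all of $V(C_7)$ (the wheel on eight vertices). This graph is bull-free (any bull would need a pendant vertex non-adjacent to $v$, but $v$ is complete to everything), is $C_5$-free, and its unique odd hole is the $C_7$ itself --- yet that hole is as far from clean as possible, since $v$ is adjacent to all seven of its vertices. So no amount of ``pushing the case analysis'' on attachments will deliver the cleanliness you need: bull-freeness and minimality alone do not exclude major vertices. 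The paper proves cleanliness (Theorem~\ref{pure1}) only under the \emph{additional} hypothesis that the graph contains no anchor; in the wheel example above, $\{c_1,c_2,c_3,c_4,v,c_6\}$ is precisely an anchor.

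What is missing from your proposal is the mechanism the paper uses to handle such graphs: anchors are not cleaned away but decomposed away. By Theorem~\ref{hset} (which rests on the trigraph structure theory of bull-free graphs), a bull-free $C_5$-free graph containing an anchor has a homogeneous set $X$; the algorithm then recurses on $G_1(X)$ and $G_2(X)$, which is correct by Theorem~\ref{holeprime} and stays within the $O(|V(G)|^5)$ budget because $|V(G_1(X))|+|V(G_2(X))|=|V(G)|+1$. (In the wheel example, $V(C_7)$ is itself a homogeneous set, and the recursion finds the hole inside $G_2(X)=C_7$.) Only in the anchor-free branch does the paper argue as you do --- and even there it does not improvise a search procedure: it verifies jewel-freeness and pyramid-freeness (Theorem~\ref{jewelpyramid}, which again needs anchor-freeness) and invokes Theorem 4.2 of the Berge-recognition paper as a black box. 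That matters for a second, lesser gap in your write-up: a ``shortest induced $x$--$z$ path of prescribed parity'' cannot simply be computed by breadth-first search (parity-constrained induced paths are not BFS-able in general); the cited $O(|V(G)|^4)$ subroutine is considerably more delicate and is exactly what its jewel/pyramid hypotheses pay for. Your opening observation about $C_7^c$ --- that Berge testing cannot substitute for odd-hole detection --- is correct and consistent with the paper, but the algorithm you build on top of it cannot be repaired without introducing the homogeneous set decomposition (or some equivalent treatment of major vertices).
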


Since testing if a graph $G$ is bull-free can be done in time  $O(|V(G)|^5)$
by enumerating all $5$-tuples of vertices, \ref{main} immediately implies:

\begin{theorem}
\label{robust}
There is an algorithm  that tests if an input  graph $G$ contains a bull or an odd hole in time $O(|V(G)|^5)$.
\end{theorem}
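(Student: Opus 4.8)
The plan is to run a two-phase algorithm and argue that the disjunction ``$G$ contains a bull or an odd hole'' is decided correctly by composing a brute-force bull test with the algorithm of \ref{main}. First I would enumerate every $5$-element set of distinct vertices of $G$ and check, using a constant number of adjacency queries, whether the subgraph induced on those five vertices is isomorphic to the bull. There are $O(|V(G)|^5)$ such sets and each test costs $O(1)$, so this phase runs in $O(|V(G)|^5)$ time and determines exactly whether $G$ contains a bull.

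If the first phase finds a bull, the answer to the combined question is immediately ``yes'', and the algorithm halts. Otherwise $G$ is bull-free, so the remaining task reduces to deciding whether $G$ contains an odd hole; here I would invoke the algorithm of \ref{main}, whose hypothesis---that the input is bull-free---is now guaranteed, at a further cost of $O(|V(G)|^5)$. Returning ``yes'' precisely when this call reports an odd hole completes the procedure. For correctness, observe that $G$ contains a bull or an odd hole if and only if either it contains a bull (detected in the first phase) or it is bull-free and contains an odd hole (detected in the second). The one subtlety worth flagging is the ordering: the bull test must precede the call to \ref{main}, since that algorithm is only guaranteed to behave correctly on bull-free inputs and so cannot be trusted until the first phase has certified that no bull is present.

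I do not expect any genuine obstacle in this argument, since the two phases together cost $O(|V(G)|^5) + O(|V(G)|^5) = O(|V(G)|^5)$, giving the claimed bound. All of the real difficulty is concentrated in \ref{main}; \ref{robust} is simply the observation that prepending a self-contained bull test converts that conditional procedure into an unconditional (``robust'') one that needs to assume nothing about its input.
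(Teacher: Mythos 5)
Your proposal is correct and matches the paper's own argument: the paper derives \ref{robust} from \ref{main} in exactly this way, by noting that bull-freeness can be tested by enumerating all $5$-tuples in $O(|V(G)|^5)$ time and then invoking the algorithm of \ref{main} on the certified bull-free input. Your remark about the ordering of the two phases is a sensible point, but there is no substantive difference from the paper.
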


Given a graph $G$ and two disjoint sets $A,B \subseteq V(G)$ we say that
$A$ is {\em complete} to $B$ if every vertex in $A$ is adjacent to every vertex in $B$, and that $A$ is {\em anticomplete} to $B$ if every vertex in $A$ is non-adjacent to every vertex in $B$. If $|A|=1$, say $A=\{a\}$, we say that $a$ (instead of $\{a\}$) is complete (or anticomplete) to $B$. An edge is $A$-complete
(or $a$-complete) if both of its ends are complete to $A$.
A set $X \subseteq V(G)$ is a {\em homogeneous set} if $1<|X|<|V(G)|$ and every vertex 
of $V(G) \setminus X$ is either complete or anticomplete to $X$.
If $G$ contains a homogeneous set, we say that $G$ {\em admits a homogeneous set decomposition}.
A six-vertex graph is an {\em anchor} if it consists of a $4$-vertex induced 
path $P$, a vertex $c$ complete to $V(P)$, and a vertex $a$ anticomplete to $V(P)$. (The only adjacency that has not been specified is between $a$ and $c$, and 
so there are exactly two anchors.)

Here is the outline of the algorithm.
First we test, by enumerating all $5$-tuples, if the input graph contains $C_5$,
and so from now on we may assume that the input is bull-free and $C_5$-free.
The following is a structural result about bull-free graphs that follows  
easily from \cite{bulls1}

\begin{theorem}
\label{hset}
If a $C_5$-free bull-free graph contains an anchor, then it contains   
a homogeneous set.
\end{theorem}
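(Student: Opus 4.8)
The plan is to derive this from the main structural theorem of \cite{bulls1}. Note first that an anchor is exactly what \cite{bulls1} calls a three-edge-path with a center and an anticenter: the induced path $P = p_1-p_2-p_3-p_4$ is the three-edge-path, the vertex $c$ complete to $V(P)$ is the center, and the vertex $a$ anticomplete to $V(P)$ is the anticenter. The theorem of \cite{bulls1} states that a bull-free graph containing such a configuration admits a homogeneous pair decomposition; that is, there are disjoint nonempty sets $A, B \subseteq V(G)$, with $A$ neither complete nor anticomplete to $B$, subject to the usual non-degeneracy conditions ($|A \cup B| \geq 3$ and at least two vertices outside $A \cup B$), such that every vertex of $V(G) \setminus (A \cup B)$ is complete or anticomplete to $A$ and, separately, complete or anticomplete to $B$. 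So the first step is simply to apply \cite{bulls1} and obtain such a pair $(A,B)$. The whole remaining task, and the only place where the hypothesis that $G$ is $C_5$-free is used, is to upgrade this homogeneous \emph{pair} into a homogeneous \emph{set}.

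For the upgrade I would classify each vertex of $V(G) \setminus (A \cup B)$ according to whether it is complete to both $A$ and $B$, complete to $A$ and anticomplete to $B$, anticomplete to $A$ and complete to $B$, or anticomplete to both. If neither of the two mixed types occurs, then every outside vertex is complete or anticomplete to all of $A \cup B$, so $A \cup B$ is already a homogeneous set and we are done. Otherwise there is an outside vertex that is complete to one side and anticomplete to the other; the aim is then to combine such a vertex with an edge and a non-edge between $A$ and $B$ (both of which exist, since $A$ is neither complete nor anticomplete to $B$) and use $C_5$-freeness to force the existence of two vertices of $A$, or of $B$, with the same neighbourhood in the rest of the graph, that is, a pair of twins, and hence a homogeneous set of size two.

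The main obstacle is that this last reduction is genuinely false for an \emph{arbitrary} homogeneous pair, so it cannot be carried out using the pair alone: a suitable triangle-free graph (which is automatically $C_5$-free and bull-free) can carry a homogeneous pair while being prime, having no homogeneous set whatsoever. Consequently the argument must exploit the additional structure that \cite{bulls1} attaches to the pair through the anchor, namely the center $c$ and the anticenter $a$; intuitively $c$ should play the role of a vertex complete to both $A$ and $B$, and $a$ that of a vertex anticomplete to both, and it is their interaction with the mixed outside vertices, under $C_5$-freeness, that must rule out the prime configurations above and produce the required twins (or else leave $A \cup B$ itself homogeneous). The bulk of the work will therefore be this case analysis, tracking the adjacencies across $A$ and $B$ carefully enough to locate an induced $C_5$ in every surviving case, together with the minor bookkeeping of handling the two anchors separately according to whether or not $a$ is adjacent to $c$.
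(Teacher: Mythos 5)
Your proposal has a genuine gap at its center, and you point to it yourself. The plan is: extract a homogeneous pair from \cite{bulls1}, then use $C_5$-freeness to upgrade the pair to a homogeneous set. But, as you concede, this upgrade is impossible for an arbitrary homogeneous pair, and your proposal never supplies the missing argument---it only asserts that the anchor's center and anticenter ``must'' be exploited in a case analysis that is left entirely unspecified. A sketch whose key step is acknowledged to fail in general, and is then deferred, is not a proof; moreover it is not even clear how the case analysis would begin, since the pair produced by the structure theorem need not interact with the anchor in the way you hope (the center and anticenter could, for instance, lie inside $A \cup B$). A small side error: a triangle-free graph is \emph{not} automatically $C_5$-free ($C_5$ itself is triangle-free), though this does not affect your correct underlying point that homogeneous pairs do not in general yield homogeneous sets.

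The actual route---the one the paper takes---requires no upgrade at all, because the conclusion of \cite{bulls1} is stronger than you remember. Theorem~3.3 of \cite{bulls1} is a trichotomy for bull-free trigraphs that are not elementary (i.e., that contain an anchor): either one of $G,G^c$ belongs to the class $\mathcal{T}_0$, or one of $G,G^c$ contains a homogeneous pair \emph{of type zero} (a specific kind of pair, not an arbitrary one), or $G$ admits a homogeneous set decomposition. The hypotheses of \ref{hset} are used to kill the first two outcomes outright, not to repair them: every member of $\mathcal{T}_0$ contains a semi-adjacent pair, so neither $G$ nor $G^c$ can lie in $\mathcal{T}_0$ when $G$ is a graph rather than a trigraph; and every graph with a homogeneous pair of type zero contains a $C_5$, which (since $C_5$ is self-complementary) is excluded for both $G$ and $G^c$ by $C_5$-freeness. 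What survives is the third outcome, which is exactly the desired homogeneous set. So $C_5$-freeness eliminates an alternative branch of the structure theorem; it is never used to transform a homogeneous pair into a homogeneous set, and no such transformation is needed.
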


There are standard techniques that allow us to reduce the problem of testing for an odd hole to graphs with no homogeneous sets. Consequently, in view of
\ref{hset}, it is enough to design an algorithm that detects an odd hole in a 
bull-free graph that does not contain an anchor.

A hole $C$ in a graph $G$ is {\em clean} if for every 
$v \in V(G) \setminus V(C)$, the set of neighbors of $v$ in $V(C)$ is contained in a two-edge path of $C$. A {\em shortest odd hole} in $G$ is  an odd hole of minimum length. We say that $G$ is {\em pure} if either it contains no odd 
hole, or it contains a shortest odd hole that is also clean. 
Is is not difficult to prove that:

\begin{theorem} 
\label{pure}
Every bull-free graph without an anchor is pure.
\end{theorem}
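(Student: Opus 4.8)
The plan is to prove the slightly stronger statement that in a bull-free graph with no anchor \emph{every} shortest odd hole is clean; purity then follows immediately. Since the algorithm has already reduced to the $C_5$-free case, I may assume $G$ is $C_5$-free, so a shortest odd hole $C = c_1 - c_2 - \cdots - c_k - c_1$ has length $k \ge 7$. Fix $v \in V(G) \setminus V(C)$, put $N = N(v) \cap V(C)$, and suppose for contradiction that $v$ is not clean; in particular $|N| \ge 2$. Listing $N$ in cyclic order along $C$, consecutive neighbors cut $C$ into arcs, and I write $\ell_1, \ldots, \ell_m$ for the numbers of edges of $C$ in these arcs, so that $\ell_1 + \cdots + \ell_m = k$.

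First I would extract the arithmetic consequences of minimality. If an arc has length $\ell_i \ge 2$, then its interior vertices lie outside $N$, so that arc together with $v$ induces a cycle of length $\ell_i + 2$; when $\ell_i$ is odd this is an odd hole, and by minimality $\ell_i + 2 \ge k$. Thus an odd arc of length at least $3$ must have length exactly $k-2$, forcing the remaining arcs to sum to $2$. Checking the two ways this can happen, $N$ is then either two vertices at distance $2$ or three consecutive vertices, and in both cases $N$ lies in a two-edge path --- contradicting non-cleanness. Hence I may assume from now on that every arc has length $1$ or is even.

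Now comes the main idea. Writing $a$ for the number of length-$1$ arcs, reduction modulo $2$ of $k = \sum_i \ell_i$ gives $a \equiv k \equiv 1$, so $a$ is odd, and in particular $a \ge 1$. Partition $N$ into its maximal runs of consecutive vertices of $C$: a run on $r$ vertices accounts for exactly $r-1$ of the length-$1$ arcs, and distinct runs are separated by (even) arcs of length at least $2$. I would then pin down the run lengths using the two excluded configurations. A run equal to $\{c_p, c_{p+1}\}$ yields a bull on $\{v, c_{p-1}, c_p, c_{p+1}, c_{p+2}\}$ --- the triangle $v c_p c_{p+1}$ with pendants $c_{p-1}$ at $c_p$ and $c_{p+2}$ at $c_{p+1}$, the required non-edges following from $c_{p-1}, c_{p+2} \notin N$ and the chordlessness of $C$ --- so no run has length $2$. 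A run containing four consecutive vertices $c_p, c_{p+1}, c_{p+2}, c_{p+3}$ makes $v$ complete to the induced path they form, while $c_{p+5}$ (which exists and is anticomplete to that path since $k \ge 7$) serves as the anticomplete vertex, producing an anchor; so no run has length at least $4$.

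Therefore every run has length $1$ or $3$, and each contributes an even number ($0$ or $2$) of length-$1$ arcs, making $a$ even --- contradicting $a$ odd. This eliminates the remaining case, so $v$ is clean after all; as $v$ was arbitrary, $C$ is clean and $G$ is pure. I expect the main obstacle to be precisely this closing parity argument: the minimality step by itself permits many ``even-gap'' attachments, and it is the interplay of the bull (killing runs of length $2$), the anchor together with $k \ge 7$ (killing long runs), and the modulo-$2$ count of length-$1$ arcs that forces the contradiction. The routine parts --- the degenerate-case analysis and the verification of the exact adjacencies in the bull and in the anchor --- I would confirm carefully but anticipate no difficulty there.
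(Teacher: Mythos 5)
Your proof is correct and takes essentially the same approach as the paper's own proof (which, like yours, actually proves the statement under the added $C_5$-free hypothesis --- see Theorem~\ref{pure1} --- justified by the algorithmic reduction): minimality of the shortest odd hole forces every gap between consecutive neighbours of $v$ to be even or of length one, runs of exactly two consecutive neighbours are killed by the bull, and runs of four or more by the anchor together with $k\geq 7$. The only cosmetic difference is the endgame --- the paper extracts one odd-length stretch from the parity count and contradicts it directly, while you exclude all stretch lengths and then contradict parity --- which is the same argument in contrapositive order.
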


A ``jewel'' and a ``pyramid'' are two types of graphs that we will define later;
we will also show that:

\begin{theorem}
\label{jewelpyramid}
Every jewel and every pyramid  contains a bull, a $C_5$,  or an anchor. 
\end{theorem}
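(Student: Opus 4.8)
The plan is to handle the two configurations separately, locating the required induced subgraph near the ``base'' of a pyramid and near the five special vertices of a jewel. In both cases the defining conditions force long induced paths whose near-endpoints, together with the few distinguished vertices, already span a bull, a $C_5$, or an anchor; the connecting paths serve only to supply a vertex of the correct adjacency type.

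\medskip
\noindent\textbf{Pyramids.} Recall (from the definition to be given below) that a pyramid has an apex $a$, a triangle $\{b_1,b_2,b_3\}$, and three induced paths $P_1,P_2,P_3$ from $a$ to $b_1,b_2,b_3$, pairwise meeting only in $a$, with the only edges between distinct legs being the triangle edges, and with at most one leg of length $1$. Hence at least two legs, say $P_1$ and $P_2$, have length at least $2$; let $b_1'$ and $b_2'$ be the neighbours of $b_1$ and $b_2$ on $P_1$ and $P_2$. These are interior vertices, so they are distinct from $b_1,b_2,b_3$ and from each other, and since cross-edges between legs are confined to the triangle, each of $b_1'b_2'$, $b_1'b_2$, $b_1'b_3$, $b_2'b_1$, $b_2'b_3$ is a non-edge. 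Thus $\{b_1,b_2,b_3,b_1',b_2'\}$ induces a bull, with triangle $b_1b_2b_3$ and pendant edges $b_1b_1'$, $b_2b_2'$, and the pyramid case is finished.

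\medskip
\noindent\textbf{Jewels.} Recall that a jewel consists of $v_1,\dots,v_5$ with $v_1v_2,v_2v_3,v_3v_4,v_4v_5,v_5v_1$ edges and $v_1v_3,v_2v_4$ non-edges, together with a path $P$ from $v_1$ to $v_4$ whose interior $P^*$ is anticomplete to $\{v_2,v_3,v_5\}$; in particular $v_1v_2v_3v_4$ is an induced path and $v_5$ is adjacent to both of its ends. I would split on the adjacency of $v_5$ to $\{v_2,v_3\}$. First, if $v_5$ is anticomplete to $\{v_2,v_3\}$, then $\{v_1,v_2,v_3,v_4,v_5\}$ induces a $C_5$. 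Second, if exactly one of $v_2v_5,v_3v_5$ is an edge, say $v_3v_5$ (the other case is symmetric under $v_1\leftrightarrow v_4$, $v_2\leftrightarrow v_3$), I let $q$ be the neighbour of $v_4$ in $P^*$ and check that $\{v_2,v_3,v_4,v_5,q\}$ induces a bull with triangle $\{v_3,v_4,v_5\}$ and pendants $v_2$ at $v_3$ and $q$ at $v_4$. Third, if $v_5$ is complete to $\{v_2,v_3\}$, then $v_5$ is complete to the induced path $v_1v_2v_3v_4$, and any $\alpha\in P^*$ that is anticomplete to $\{v_1,v_2,v_3,v_4\}$---an interior vertex of $P$ other than the two adjacent to $v_1$ and $v_4$---produces an anchor with spine $v_1v_2v_3v_4$, centre $v_5$, and apex $\alpha$. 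When $P$ is too short for such an $\alpha$ to exist, I would instead exhibit a short odd hole or a bull directly: for instance, when $P$ has a unique interior vertex $p$, the set $\{v_1,p,v_4,v_3,v_2\}$ induces a $C_5$.

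\medskip
\noindent\textbf{Main obstacle.} The pyramid case is immediate, so essentially all of the work lies in the jewel. The crux is the bookkeeping in the jewel case analysis: in each branch I must verify that the chosen five or six vertices are pairwise distinct and carry no unwanted chord, which forces me to track precisely which of the pairs $v_1v_4$, $v_2v_5$, $v_3v_5$ are left unconstrained by the definition and to separate out the degenerate situations in which $P$ is so short that its near-endpoints coincide or acquire extra neighbours. Once the case split is arranged so that every admissible jewel lands in the $C_5$, bull, or anchor branch without a gap, each individual verification reduces to a routine check of adjacencies against the defining conditions.
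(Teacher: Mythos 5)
Your approach is essentially the paper's. The pyramid argument (triangle $b_1b_2b_3$ plus the leg-neighbours of two base vertices whose legs have length at least two) is exactly the bull the paper exhibits. Your jewel analysis has the same skeleton as the paper's: work with an induced path $P$ from $v_1$ to $v_4$ whose interior is anticomplete to $\{v_2,v_3,v_5\}$ (in the paper this path is extracted from the connectedness of $F$ together with the fact that $v_1,v_4$ have neighbours in $F$ --- the definition gives a connected set, not a path), and split on the adjacency of $v_5$ to $\{v_2,v_3\}$: anticomplete gives the $C_5$ on $\{v_1,\ldots,v_5\}$, exactly one edge gives a bull (yours is precisely the paper's bull after applying the jewel's symmetry $v_1\leftrightarrow v_4$, $v_2\leftrightarrow v_3$), and complete gives an anchor using a deep interior vertex of $P$. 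Your worry about which pairs are constrained is resolved favourably by the paper's definition: $v_1v_4$ is indeed a non-edge, and $v_2v_5$, $v_3v_5$ are the only unspecified pairs, so your case split is exhaustive and your verifications are correct as far as they go.

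There is, however, one case left open: $v_5$ complete to $\{v_2,v_3\}$ and $P$ having exactly two interior vertices, say $P=v_1-p-q-v_4$. Here no vertex $\alpha$ anticomplete to $\{v_1,v_2,v_3,v_4\}$ exists, and your ``for instance'' covers only the case of a unique interior vertex. The missing verification is genuinely different from the one you gave: the $C_5$ you used for $|P^*|=1$ runs through $v_2,v_3$, whereas for $|P^*|=2$ one must route the cycle through $v_5$ instead, namely $v_1-p-q-v_4-v_5-v_1$, which is induced because $P$ is induced, $p,q\in F$ are anticomplete to $v_5$, and $v_1v_4$ is a non-edge. (This cycle requires no assumption on $v_2v_5,v_3v_5$ at all, which is why the paper disposes of $|V(P)|\in\{3,4\}$ with explicit $C_5$'s \emph{before} splitting on $v_5$, so that the deep interior vertex is guaranteed to exist in every remaining case.) With that one sentence added, your proof is complete and coincides, up to reordering of cases, with the paper's.
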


The following is Theorem 4.2 of \cite{Bergealg}:

\begin{theorem}
\label{shortestodd}
There is an algorithm with the following specifications.
\begin{itemize}
\item {\bf Input:} A graph $G$ with no induced subgraph that is a jewel or a 
pyramid.
\item {\bf Output:} A determination if $G$ has a clean shortest odd hole.
\item {\bf Running time:} $O(|V(G)|^4)$.
\end{itemize}
\end{theorem}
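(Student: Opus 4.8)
The plan is to reduce the detection of a clean shortest odd hole to a bounded number of shortest-path computations, exploiting the fact that such a hole is essentially rigid: once a few of its vertices are fixed, cleanness and the hypothesis on $G$ force the rest of the hole to reappear as a shortest path of prescribed parity in an auxiliary graph. Concretely, suppose $C=c_1-\cdots-c_k-c_1$ is a clean shortest odd hole, so $k$ is odd and $k\geq 5$, and fix three consecutive vertices $c_1,c_2,c_3$. The long arc $A=c_3-c_4-\cdots-c_k-c_1$ is an induced path of length $k-2$ (hence odd) whose interior $A^*$ is, by the definition of a hole, anticomplete to $c_2$. Let $G'$ be obtained from $G$ by deleting $c_2$ and every neighbor of $c_2$ other than $c_1$ and $c_3$. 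Then $A$ survives in $G'$, and I would show that it is a shortest odd $c_1$–$c_3$ path in $G'$: any odd $c_1$–$c_3$ path $Q$ in $G'$ of length less than $k-2$ has interior anticomplete to $c_2$, so adding $c_2$ together with the edges $c_2c_1$ and $c_2c_3$ turns $Q$ into an induced odd hole of length at most $k-2<k$, contradicting the minimality of $C$.

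This suggests the following algorithm. I would enumerate the pair $(c_2,c_1)$ over all ordered pairs of adjacent vertices, of which there are $O(|V(G)|^2)$. For each such pair I would form $G_0=G$ with $c_2$ and all of its neighbors deleted, run a single parity-respecting breadth-first search from $c_1$ in $G_0$ with $c_1$ re-attached to its neighbors in $G_0$, and record, for every vertex, the lengths of a shortest even and a shortest odd walk from $c_1$. Then, for each candidate third vertex $c_3$ adjacent to $c_2$, I would read off in time proportional to the degree of $c_3$ the least odd $c_1$–$c_3$ length whose interior lies in $G_0$, reconstruct the corresponding cycle through $c_2$, and test whether it is an induced odd hole. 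Since the search costs $O(|V(G)|^2)$ for each of the $O(|V(G)|^2)$ pairs, and reading off the optimal $c_3$ costs $O(|E(G)|)$ in total per pair, the running time is $O(|V(G)|^4)$, as required; the procedure outputs ``yes'' exactly when some reconstructed cycle is an induced odd hole.

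Two points require verification, and the second is the main obstacle. The first is that a cycle reconstructed by the procedure is genuinely an induced odd hole, which is immediate once the reconstructed path is known to be induced and internally anticomplete to $c_2$ (note $k-2\geq 3$, so no triangle arises). The real difficulty is that the parity-respecting search computes a shortest odd \emph{walk}, not a shortest odd \emph{path}: such a walk might wind through an extraneous odd cycle or take a multi-vertex shortcut across the arc, so a priori its length could drop below $k-2$ while corresponding to no short odd hole at all. Establishing that this cannot happen --- that the shortest odd walk between the two ends of the arc, with interior confined to $G_0$, is attained by an induced path which closes up through $c_2$ into a clean shortest odd hole --- is exactly where the hypothesis that $G$ contains no jewel and no pyramid is indispensable: these are precisely the configurations that would permit a parity-changing detour or a chord across the closure. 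Cleanness feeds the same argument, since it forces every vertex outside the hole to have its $C$-neighbors confined to three consecutive vertices, which both limits the available shortcuts and guarantees that reattaching $c_2$ yields a chordless cycle. Granting this structural fact, completeness follows (for the correct seed the arc realizes the shortest odd walk) and soundness follows (any walk the search returns is an induced odd hole, of minimum length by the same minimality argument), completing the proof.
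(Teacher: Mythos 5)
First, a point of reference: the paper does not prove this statement at all --- it is imported verbatim as Theorem 4.2 of \cite{Bergealg} and used as a black box --- so the only meaningful comparison is with the proof given there. Judged on its own terms, your proposal has a genuine gap, and it is the one you name yourself: everything rests on the unproved ``structural fact'' that, for the correct seed $(c_1,c_2,c_3)$ on a clean shortest odd hole $C$, the shortest odd \emph{walk} from $c_1$ to $c_3$ with interior avoiding $N(c_2)$ has length exactly $k-2$ and is attained by an induced path. Saying that jewels and pyramids ``are precisely the configurations that would permit a parity-changing detour'' is an assertion, not an argument, and it is doubtful as stated: the elementary obstruction to parity-of-walk arguments is a triangle (or more generally a short odd cycle that is not a hole) attached near the arc, and nothing in the hypotheses excludes triangles. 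Relatedly, your supporting claim that any odd $c_1$--$c_3$ path $Q$ in $G'$ of length less than $k-2$ would ``contradict the minimality of $C$'' is valid only when $Q$ is induced: gluing $c_2$ onto a non-induced odd path yields a non-induced odd cycle, which need only contain a triangle, not an odd hole, so no contradiction with the minimality of $C$ (which is minimality among \emph{holes}) arises. Since a parity-respecting BFS returns shortest odd walks rather than shortest induced odd paths, this is exactly the case you cannot rule out, and the argument collapses at precisely the point where the hypotheses were supposed to do their work.

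For comparison, the proof in \cite{Bergealg} avoids parity searches entirely. It fixes $x_1$ on $C$ together with the edge $x_2x_3$ of $C$ ``antipodal'' to $x_1$ (so that both arcs of $C$ from $x_1$ have length $(k-1)/2$), computes ordinary shortest paths $R(x_1,x_2)$ and $R(x_1,x_3)$ (which are automatically induced), and then proves --- this is the real content, a case analysis using cleanness together with jewel- and pyramid-freeness --- that $V(R(x_1,x_2)) \cup V(R(x_1,x_3))$ induces a shortest odd hole; the enumeration is over triples, with all-pairs BFS precomputed, which is how the $O(|V(G)|^4)$ bound is met. Note that your version, as described, does not meet that bound either: verifying that a reconstructed cycle is an induced odd hole costs $O(|V(G)|^2)$ per candidate $c_3$, with $O(|V(G)|)$ candidates per seed pair and $O(|V(G)|^2)$ seed pairs, i.e., $O(|V(G)|^5)$ overall.
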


\ref{shortestodd} immediately implies 
\begin{theorem}
\label{pureodd}
There is an algorithm with the following specifications.
\begin{itemize}
\item {\bf Input:} A pure graph $G$ with no induced subgraph that is a jewel 
or a pyramid.
\item {\bf Output:} A determination if $G$ has an odd hole.
\item {\bf Running time:} $O(|V(G)|^4)$.
\end{itemize}
\end{theorem}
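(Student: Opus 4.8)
The plan is to run the algorithm of \ref{shortestodd} directly on $G$, exploiting the fact that $G$ has no jewel and no pyramid, and then to interpret its answer through the definition of purity. First I would invoke \ref{shortestodd} to determine, in time $O(|V(G)|^4)$, whether $G$ contains a clean shortest odd hole. This call is legitimate precisely because $G$ satisfies the input hypothesis of \ref{shortestodd}, namely that it has no induced subgraph isomorphic to a jewel or a pyramid; these hypotheses carry over verbatim from the statement of \ref{pureodd}.

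Next I would analyze the two possible outcomes. If the algorithm reports that $G$ has a clean shortest odd hole, then in particular $G$ has an odd hole, and I output that $G$ has an odd hole. The nontrivial direction is the other one: if the algorithm reports that $G$ has \emph{no} clean shortest odd hole, I must still be able to conclude that $G$ has no odd hole at all. This is exactly where purity enters. Suppose for contradiction that $G$ did contain an odd hole. Then $G$ contains a shortest odd hole, and since $G$ is pure, the definition guarantees that $G$ then contains a shortest odd hole that is also clean --- contradicting the negative answer of \ref{shortestodd}. Hence in this case $G$ has no odd hole, and I output accordingly.

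The main (and essentially only) subtlety is to make the logical contrapositive line up exactly with the definition of ``pure'': purity says that the mere \emph{existence} of any odd hole in $G$ forces the existence of a clean shortest one, which is precisely the object that \ref{shortestodd} detects. Thus the negative answer of \ref{shortestodd} is equivalent, under purity, to the absence of any odd hole, and the positive answer trivially witnesses one. Since the reduction performs a single call to the algorithm of \ref{shortestodd} together with a constant amount of additional bookkeeping, the total running time is $O(|V(G)|^4)$, as required.
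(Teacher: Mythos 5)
Your proposal is correct and matches the paper's intended argument: the paper simply states that \ref{shortestodd} ``immediately implies'' \ref{pureodd}, and your write-up spells out exactly that implication --- a single call to the algorithm of \ref{shortestodd}, with purity used to convert a negative answer about clean shortest odd holes into the absence of any odd hole. Nothing is missing; the running time and the logical contrapositive are handled exactly as intended.
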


Combining \ref{pure}, \ref{jewelpyramid} and \ref{pureodd} we deduce:

\begin{theorem}
\label{noanchoralg}
There is an algorithm with the following specifications.
\begin{itemize}
\item {\bf Input:} A bull-free $C_5$-free graph $G$ that does not contain an anchor.
\item {\bf Output:} A determination if $G$ has an odd hole.
\item {\bf Running time:} $O(|V(G)|^4)$.
\end{itemize}
\end{theorem}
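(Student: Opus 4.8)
The plan is to assemble the algorithm directly from Theorems \ref{pure}, \ref{jewelpyramid}, and \ref{pureodd}, since the hypotheses on the input have been arranged precisely to feed into those three results. The algorithm itself will do essentially no work beyond invoking the procedure of \ref{pureodd}; the whole content of the proof lies in checking that this procedure's two hypotheses—that the input is pure and that it has no jewel or pyramid—are satisfied.

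First I would verify that the input graph $G$ contains no induced subgraph isomorphic to a jewel or a pyramid. Suppose for contradiction that $G$ contained such a subgraph $H$. By \ref{jewelpyramid}, $H$ (being a jewel or a pyramid) contains a bull, a $C_5$, or an anchor. Because the relation ``contains'' is transitive—an induced subgraph of an induced subgraph of $G$ is again an induced subgraph of $G$—that bull, $C_5$, or anchor would then be contained in $G$ itself, contradicting the assumption that $G$ is bull-free, $C_5$-free, and anchor-free. Hence $G$ has no jewel and no pyramid. Second, since $G$ is bull-free and contains no anchor, \ref{pure} guarantees that $G$ is pure, i.e.\ either $G$ has no odd hole or it has a shortest odd hole that is clean. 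With both hypotheses now established, applying the algorithm of \ref{pureodd} to $G$ correctly decides whether $G$ has an odd hole, and runs in $O(|V(G)|^4)$ time, which is exactly the bound claimed.

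I do not anticipate a genuine obstacle here: all of the difficulty has already been discharged into the three cited theorems, and the present statement is purely a reduction assembling them. The one point deserving explicit care is the transitivity argument of the first step—ensuring that forbidding the bull, $C_5$, and the anchor in $G$ automatically forbids every jewel and every pyramid as an induced subgraph—since the running-time guarantee of \ref{pureodd} is only valid once its structural hypotheses are known to hold. I would therefore state that hereditary property cleanly before invoking \ref{pureodd}.
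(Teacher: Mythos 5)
Your proposal is correct and follows exactly the paper's route: the paper derives this theorem by the same combination of \ref{pure}, \ref{jewelpyramid}, and \ref{pureodd} (stating only ``Combining \ref{pure}, \ref{jewelpyramid} and \ref{pureodd} we deduce''), and your write-up simply makes explicit the hereditary/transitivity argument that excludes jewels and pyramids before invoking \ref{pureodd}.
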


This paper is organized as follows.   In Section~2 we define jewels and pyramids, and prove \ref{pure} and \ref{jewelpyramid}. In Section~3 we introduce the necessary terminology from~\cite{bulls1} and prove~\ref{hset}. In Section~4 we explain why  \ref{noanchoralg} implies \ref{main}.

\section{Jewels, pyramids and shortest odd holes}

First we prove (a slight strengthening of)  \ref{pure}.

\begin{theorem} 
\label{pure1}
Every $C_5$-free bull-free graph without an anchor is pure. In fact, every shortest odd hole in such a graph is clean.
\end{theorem}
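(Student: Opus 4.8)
The plan is to fix a shortest odd hole $C=c_1-\cdots-c_k-c_1$ (if none exists the statement is vacuous) and to show that every vertex $v\in V(G)\setminus V(C)$ is \emph{locally clean}, meaning that $N=N(v)\cap V(C)$ is contained in three consecutive vertices of $C$; this is exactly cleanliness. Since $G$ is $C_5$-free there is no odd hole of length $5$, and no $3$-cycle is a hole, so $k\ge 7$, a bound I will use throughout. I would list the elements of $N$ in cyclic order, call the arc joining two cyclically consecutive elements (with no element of $N$ in its interior) a \emph{gap}, so that the gap lengths sum to $k$, and break $N$ into its maximal \emph{runs} of consecutive vertices of $C$.

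First I would record the parity obstruction. For a gap of length $\ell$ between $c_p,c_q\in N$, the vertex $v$ together with that arc induces a cycle of length $\ell+2$; if $\ell$ is odd with $3\le\ell\le k-3$ this is an odd hole of length strictly between $5$ and $k$, contradicting minimality of $C$ (and for $\ell=3$ it is moreover a $C_5$). Hence, whenever $|N|\ge 2$, every odd gap has length $1$ or $k-2$. Next I would constrain the runs using the two forbidden configurations. If $v$ has a maximal run of exactly two neighbors $c_i,c_{i+1}$, then $v\not\sim c_{i-1},c_{i+2}$ and $\{c_{i-1},c_i,c_{i+1},c_{i+2},v\}$ induces a bull (triangle $v c_i c_{i+1}$ with pendants $c_{i-1},c_{i+2}$; the needed non-adjacencies hold since $k\ge 7$), so no run has length $2$. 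If $v$ is complete to four consecutive vertices $c_i-c_{i+1}-c_{i+2}-c_{i+3}$, then this $4$-path as $P$, the vertex $v$ as the complete vertex, and any $c_j$ with $j\in\{i+5,\dots,i-2\}$ (nonempty as $k\ge 7$) as the anticomplete vertex form an anchor, so no run has length $\ge 4$. Thus every run has length $1$ or $3$, and in particular \emph{odd} length.

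Finally I would close with a global parity count. Suppose $v$ is not locally clean; then $|N|\ge 2$. If $N$ is a single run it has length $\le 3$ and lies in three consecutive vertices, a contradiction. So there are $p\ge 2$ runs, with lengths $r_1,\dots,r_p\in\{1,3\}$ and gaps $g_1,\dots,g_p\ge 2$ summing to $k-\sum_j(r_j-1)$. Since each $r_j-1$ is even, the number of odd gaps is odd, so at least one gap equals $k-2$; the remaining $p-1\ge 1$ gaps are each $\ge 2$ yet sum to $2-\sum_j(r_j-1)\le 2$, forcing $p=2$, both runs of length $1$, and gaps $\{2,k-2\}$. But then $N=\{c_a,c_{a+2}\}$ again lies in three consecutive vertices, contradicting non-cleanliness. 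Hence every $v$ is locally clean and $C$ is clean.

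The main obstacle is that the shortest-hole parity argument controls only \emph{odd} gaps; \emph{even} gaps — precisely the attachments that yield jewel- and pyramid-type obstructions in general graphs — escape it entirely. The crux is therefore the run analysis: bull-freeness eliminates runs of length $2$ and anchor-freeness eliminates runs of length $\ge 4$, forcing \emph{all} runs to have odd length. This parity of the runs is exactly what makes ``the number of odd gaps is odd'' usable, producing a gap of length $k-2$ and collapsing every would-be non-clean attachment to the harmless distance-$2$ configuration. I would double-check the tight boundary case $k=7$ in both the bull and the anchor constructions and verify that the exhibited six- and five-vertex subgraphs are genuinely induced (no extra chords), which rests on the non-adjacency of vertices at distance $2,3,4$ on $C$, valid because $k\ge 7$.
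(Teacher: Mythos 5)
Your proof is correct and uses essentially the same ingredients as the paper's: shortest-hole minimality to control the parity of gaps, a bull to exclude a maximal run of exactly two consecutive neighbors, and an anchor to exclude four consecutive neighbors. The only real difference is bookkeeping order: the paper invokes non-cleanliness at the outset, so every $v$-gap has length less than $k-2$ and hence is even, which immediately yields a stretch with an odd number of $v$-complete edges and makes your closing count (the forced gap of length $k-2$ and the collapse to $N=\{c_a,c_{a+2}\}$) unnecessary.
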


\begin{proof}
Let $G$ be a $C_5$-free bull-free graph that does not contain an anchor. 
We may assume that $G$ contains an odd hole, for otherwise $G$ is pure; let
$C$ be a shortest odd hole in $G$. Let $C=c_1-\ldots-c_k-c_1$. Then $k\geq 7$. 
We prove that $C$ is clean. Let $v \in V(G) \setminus V(C)$, and suppose that
$N(v)$ is not contained in a two-edge path of $C$. A {\em $v$-gap} is a path 
$R$ of $C$, such that $v$ is adjacent to the ends of $R$, $v$ has no neighbor in
$R^*$, and $|V(R)|>2$. A {\em $v$-stretch} is a maximal path of length at least two of $C$ all of whose vertices are complete to $v$.
Since $N(v)$ is not contained in a two-edge path of $C$,
every $v$-gap has length less than $k-2$, and so it follows from the fact that
$C$ is a shortest odd hole in $G$ that every $v$-gap has even length.  
Since every edge of $C$ is either $v$-complete or belongs to a $v$-gap, it follows that  there is an odd number of $v$-complete edges in $C$, and consequently there exists an odd $v$-stretch. If some $v$-stretch has length one, then
$G$ contains a bull, so there is a $v$-stretch of length at least three. Thus we may assume that $v$ is complete to $\{c_1,c_2,c_3,c_4\}$. But now 
$G[\{c_1, c_2, c_3, c_4,v,c_6\}]$ is an anchor, a contradiction. This proves~\ref{pure}.
\end{proof} 

Next we prove \ref{jewelpyramid}. We start with the necessary 
definitions.
A {\em pyramid} is a graph formed by the union of a triangle 
$\{b_1,b_2,b_3\}$ (called the {\em base} of the pyramid),
a fourth vertex $a$ (called its {\em apex}), and three paths $P_1,P_2,P_3$, satisfying:
\begin{itemize}
\item for $i = 1,2,3$, $P_i$ has ends $a$ and $b_i$
\item for $1 \le i < j \le 3$, $a$ is the only vertex in both $P_i,P_j$, and $b_ib_j$ is the only edge of $G$ between 
$V(P_i)\setminus\{a\}$ and $V(P_j)\setminus\{a\}$
\item $a$ is adjacent to at most one of $b_1,b_2,b_3$.
\end{itemize}

A {\em jewel} is a graph $H$ with vertex set $\{v_1,v_2,v_3,v_4,v_5\} \cup F$, 
such that $H[F]$ is connected, 
$v_1v_2, v_2v_3, v_3v_4,  v_4v_5, v_5v_1$ are edges, $v_1v_3,v_2v_4,v_1v_4$ 
are non-edges, and $v_2,v_3,v_5$ are $F$-anticomplete, and $v_1,v_4$ are not.
In \cite{Bergealg} jewels are referred to as ``configuration $\mathcal{T}_2$''.

We now prove \ref{jewelpyramid}, which we restate.

\begin{theorem}
\label{jewelpyramid1}
Every jewel and every pyramid  contains a $C_5$, a bull,  or an anchor. 
\end{theorem}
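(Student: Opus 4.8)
The plan is to treat the pyramid and the jewel separately, and in each case to exhibit one of the three forbidden structures explicitly. The pyramid is the easy half, and in fact always yields a bull. Since the apex $a$ is adjacent to at most one of $b_1,b_2,b_3$, at least two of the three paths, say $P_2$ and $P_3$, have length at least two. Let $m_2$ be the neighbor of $b_2$ on $P_2$ and $m_3$ the neighbor of $b_3$ on $P_3$; because the paths have length at least two, $m_2,m_3\neq a$, and they are distinct from each other and from $b_1$. I would then check that $\{b_1,b_2,b_3,m_2,m_3\}$ induces a bull: the base is the triangle, $m_2$ attaches only to $b_2$ and $m_3$ only to $b_3$, and all remaining pairs are non-edges because the only edge between $V(P_i)\setminus\{a\}$ and $V(P_j)\setminus\{a\}$ is $b_ib_j$. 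So no case analysis is needed for the pyramid.

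For the jewel, the first step is to replace the vague set $F$ by a concrete induced path. Since $H[F]$ is connected and $v_1,v_4$ each have a neighbor in $F$ (while $v_2,v_3,v_5$ have none), I would pick an induced path $f_1\c f_t$ in $H[F]$ with $f_1\in N(v_1)$ and $f_t\in N(v_4)$ of minimum length $t$. Minimality forces $v_1$ to be adjacent to $f_1$ only and $v_4$ to $f_t$ only among the $f_i$, so $v_1\d f_1\c f_t\d v_4$ is an induced path and $v_1v_4$ is a non-edge. The second observation is that the only adjacencies inside $\{v_1\l v_5\}$ left unspecified by the definition are $v_2v_5$ and $v_3v_5$; the analysis will be driven by how many of these are edges, together with the value of $t$.

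The argument then splits into a short verification for each case. If $t=1$ then $v_1\d f_1\d v_4\d v_3\d v_2\d v_1$ is an induced $C_5$, and if $t=2$ then $v_1\d f_1\d f_2\d v_4\d v_5\d v_1$ is an induced $C_5$ (using that $v_5$ is anticomplete to $F$ and the minimality conditions). For $t\geq 3$ I would instead use the pentagon: if $v_2v_5$ and $v_3v_5$ are both non-edges then $\{v_1\l v_5\}$ is itself an induced $C_5$; and if exactly one of them, say $v_2v_5$, is an edge, then $\{v_1,v_2,v_5,f_1,v_3\}$ induces a bull (triangle $v_1v_2v_5$ with pendants $f_1$ at $v_1$ and $v_3$ at $v_2$), the remaining case being symmetric under $1\leftrightarrow 4,\ 2\leftrightarrow 3$.

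The case I expect to be the crux, and the reason the value of $t$ enters at all, is when $t\geq 3$ and both $v_2v_5$ and $v_3v_5$ are edges. Here $v_5$ is complete to the induced path $v_1\d v_2\d v_3\d v_4$, so I need a sixth vertex anticomplete to that path in order to produce an anchor. This is exactly where the interior of the $F$-path is used: any $f_i$ with $2\leq i\leq t-1$ is non-adjacent to $v_1$ and $v_4$ by minimality and to $v_2,v_3$ because they are $F$-anticomplete, so $\{v_1,v_2,v_3,v_4,v_5,f_i\}$ is an anchor, and such an $f_i$ exists precisely because $t\geq 3$. The main thing to get right is the bookkeeping of which pairs are edges in each subcase, especially verifying the minimality conditions on the $F$-path; no single step is deep, and the whole difficulty is organizing the split so that the two short-path cases, the three pentagon cases, and the anchor case together cover every jewel.
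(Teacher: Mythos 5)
Your proposal is correct and follows essentially the same route as the paper's proof: the pyramid yields a bull from the base triangle plus the two base-neighbors on the long paths, and the jewel is handled via an induced path through $F$ (your minimum-length path playing the role of the paper's path $P$ with $P^*\subseteq F$), with the short-path cases giving a $C_5$ and the trichotomy on $v_5$'s adjacency to $\{v_2,v_3\}$ giving a $C_5$, a bull (using $f_1$, the paper's $p$), or an anchor (using an interior vertex $f_i$, the paper's $s$). The case organization and all verifications match the paper's argument.
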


\begin{proof}
Let $H$ be a pyramid. With the notation of the definition of the pyramid, we may assume that $a$ is non-adjacent to $b_1,b_2$. For $i \in \{1,2\}$ let $c_i$ be the neighbor of $b_i$ is $P_i$, Then $c_1,c_2 \neq a$, and so $c_1$ is non-adjacent to $c_2$. But now $H[\{b_1,b_2,b_3,c_1,c_2\}]$ is a bull. 

Next let $H$ be a jewel. Since $F$ is connected and $v_1,v_4$ have neighbors in
$F$, it follows that there is a path $P$ from $v_1$ to $v_4$ with 
$P^* \subseteq F$. If $|V(P)|=3$ then $v_1-v_2-v_3-v_4-P-v_1$ is a $C_5$, and
if $|V(P)|=4$, then $v_1-v_5-v_4-P-v_1$ is a $C_5$, so we may assume that
$|V(P)| \geq 5$. Let $p$ be the neighbor of $v_1$ in $P$, and let $q$ be the 
neighbor of $v_4$ in $P$. Then there is $s \in P^* \setminus \{p,q\}$, and
$s$ is anticomplete to $\{v_1, v_2, v_3, v_4, v_5\}$. If $v_5$ is complete to 
$\{v_2,v_3\}$, then $H[\{v_1, v_2, v_3, v_4, v_5, s\}]$ is an anchor, and if $v_5$ is
anticomplete to $\{v_2,v_3\}$, then $H[\{v_1, v_2, v_3, v_4, v_5\}]$ is a $C_5$, so we may assume that $v_5$ is adjacent to $v_2$ and not to $v_3$. But now $H[\{v_1,v_2,v_5,p,v_3\}]$ is a bull. This proves \ref{jewelpyramid}. 
\end{proof}

\section{Anchors}
In this section we prove \ref{hset}. Here we rely on results of \cite{bulls1} that are stated in terms of trigraphs, rather than graphs. A trigraph is a concept generalizing graphs. While in a graph  a pair of vertices can be adjacent or non-adjacent, in a trigraph there are three possible kinds of pair: adjacent, non-adjacent and semi-adjacent, and a trigraph is a graph if it contains no semi-adjacent pairs. Since every graph is a trigraph, results from \cite{bulls1} apply in our setting. For a more formal discussion of trigraphs we refer the reader to \cite{bulls1}.

A graph (or trigraph) is called {\em elementary} if it does not contain an anchor. We need the following (Theorem 3.3 of \cite{bulls1}):

\begin{theorem}
\label{trianchor}
Let $G$ be a bull-free trigraph that is not elementary. Then either
\begin{itemize}
\item one of $G,G^c$ belongs to $\mathcal{T}_0$, or
\item one of $G,G^c$ contains a homogeneous pair of type zero, or
\item $G$ admits a homogeneous set decomposition.
\end{itemize}
\end{theorem}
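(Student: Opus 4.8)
The plan is to read \ref{trianchor} as a decomposition statement and prove it by localizing the structure of $G$ around a single anchor, in the spirit of the bull-free structure program. First I would fix a bull-free trigraph $G$ that is not elementary, so that $G$ contains an anchor: a four-vertex path $P=p_1-p_2-p_3-p_4$, a vertex $c$ complete to $V(P)$, and a vertex $a$ anticomplete to $V(P)$. The engine of the argument is that bull-freeness severely restricts how any other vertex $v$ can attach to the anchor: for almost every subset $S$ of the six anchor vertices, a vertex $v$ with $N(v)\cap(V(P)\cup\{a,c\})=S$ would, together with three suitably chosen anchor vertices, induce a bull. So the first concrete step is to enumerate the surviving attachment types and partition $V(G)$ into a bounded number of classes according to adjacency to the anchor.

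The second step is to promote these classes to genuine decompositions. The expectation is that, once the forbidden patterns are removed, two vertices attaching to the anchor in the same way also interact uniformly with the rest of $G$, so that a sufficiently large attachment class becomes a homogeneous set, yielding the third outcome of the trichotomy. When no single class is large enough for that, two attachment classes that are ``dual'' to one another should instead combine into a homogeneous pair, at which point one checks that the pair satisfies the additional conditions defining a homogeneous pair of type zero. If every such attempt collapses---that is, the attachment classes are all small and rigid---then $G$ is so tightly constrained by the anchor that $G$ must belong to the explicit basic class $\mathcal{T}_0$ (or, after passing to the complement, where a complementary anchor and the identical analysis apply, $G^c$ does). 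The complementation symmetry of the anchor is precisely what produces the ``one of $G,G^c$'' phrasing in the first two outcomes.

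The hard part will be the attachment analysis itself, which is long and entirely case-driven, and it is aggravated by the trigraph setting: semi-adjacent pairs must be tracked throughout, since a pair that is neither adjacent nor anticomplete can flip the bull count in a given case, and the homogeneous-pair conditions then have to be verified with these switchable pairs in mind. The subtlest and most laborious point is drawing the boundary between the genuinely undecomposable configurations, which must be shown to lie in $\mathcal{T}_0$, and those that only appear rigid but in fact carry a homogeneous pair of type zero; pinning down this boundary precisely, across all admissible attachment types and all choices of semi-adjacent pairs, is the heart of the theorem and the step I expect to require the overwhelming majority of the work.
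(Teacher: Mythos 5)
This statement is not proved in the paper at all: it is quoted verbatim as Theorem~3.3 of the reference on bull-free structure (the paper's only contribution here is the two observations that let it specialize the trichotomy to graphs, namely that every member of $\mathcal{T}_0$ has a semi-adjacent pair and that a homogeneous pair of type zero forces a $C_5$). So there is no in-paper argument to compare yours against; the relevant benchmark is the long structural proof in that reference, and measured against that, your proposal has a genuine gap: it is a plan, not a proof. Every substantive step is deferred. You never enumerate the attachment types of a vertex to the anchor, never prove that any class of vertices is uniform with respect to the rest of $G$, never exhibit a homogeneous set or a homogeneous pair, and never verify membership in $\mathcal{T}_0$. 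The sentence ``the hard part will be the attachment analysis itself'' concedes exactly the point: the theorem \emph{is} that analysis, and a proof that postpones it proves nothing.

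There is also a structural flaw in the plan beyond incompleteness. You treat $\mathcal{T}_0$ as a residual category --- ``if every such attempt collapses \ldots then $G$ must belong to $\mathcal{T}_0$'' --- but $\mathcal{T}_0$ and homogeneous pairs of type zero are explicitly defined objects (in Section~3 of the cited paper), not names for whatever survives your case analysis. A proof must show that the undecomposable configurations satisfy those particular definitions; one cannot define the basic class to be the leftovers and declare victory, since then the theorem would be vacuous. This matters concretely: as the present paper points out, every trigraph in $\mathcal{T}_0$ contains a semi-adjacent pair, so a bull-free non-elementary \emph{graph} with no homogeneous set can never land in your ``residual'' outcome --- it must produce a homogeneous pair of type zero in $G$ or $G^c$, and establishing that requires checking the specific conditions of that definition, which your sketch never states. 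Without the definitions in hand and the case analysis carried out against them, no step of the trichotomy can be certified.
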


The class $\mathcal{T}_0$ and homogeneous pairs of type zero are defined in 
Section~3 of \cite{bulls1}. To deduce \ref{hset} from \ref{trianchor} we make 
the following two observations.
First we observe that every member of the class $\mathcal{T}_0$ contains a semi-adjacent pair, and so the first outcome does not happen in $G$ since $G$ is a graph, rather than a trigraph. Second we note that every graph that admits a homogeneous pair of type zero contains a $C_5$. Now \ref{hset} follows.

\section{Homogeneous Sets}

In this section we show how to use homogeneous sets for the purposes of our 
algorithm.  If $X$ is a homogeneous set in a graph $G$, we define two new
graphs $G_1(X)$ and $G_2(X)$, as follows. $G_1(X)$ is obtained from $G$ by deleting all but exactly one vertex of $X$ (note that it does not make a difference which vertex of $X$ is not deleted), and $G_2(X)=G[X]$.

First we prove the following.

\begin{theorem}
\label{holeprime}
Let $G$ be a graph, and let $X$ be a homogeneous set in $G$. If $G$
contains an odd hole, then at least one of  $G_1(X)$ and $G_2(X)$ contains an 
odd hole.
\end{theorem}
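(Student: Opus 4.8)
The plan is to fix an odd hole $C = c_1 - \cdots - c_k - c_1$ in $G$ (so $k$ is odd and $k \geq 5$) and to argue according to how $C$ meets $X$. Writing $S = V(C) \cap X$, the whole proof is organized around a single structural claim: either $|S| \leq 1$, or $V(C) \subseteq X$; the intermediate situation $2 \leq |S| < k$ never occurs. Granting this claim, two of the cases are immediate. If $V(C) \subseteq X$, then $C$ is an odd hole of $G[X] = G_2(X)$, and we are done. If $S = \emptyset$, then $V(C) \subseteq V(G) \setminus X \subseteq V(G_1(X))$, and since $G_1(X)$ is an induced subgraph of $G$ on these vertices, $C$ persists as an odd hole in $G_1(X)$.

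The remaining case, $|S| = 1$, say $S = \{x\}$, needs a short argument because the vertex $x_0$ of $X$ retained in $G_1(X)$ may differ from $x$. Here I would exploit homogeneity to transport $C$ onto $x_0$. The two neighbors of $x$ on $C$ lie in $V(G) \setminus X$ and, being adjacent to $x \in X$, are complete to $X$, hence adjacent to $x_0$; every other vertex of $C$ lies in $V(G) \setminus X$ and is non-adjacent to $x \in X$, hence (as $X$ is homogeneous) anticomplete to $X$ and in particular non-adjacent to $x_0$. Therefore $(V(C) \setminus \{x\}) \cup \{x_0\}$ induces in $G_1(X)$ an odd hole isomorphic to $C$, which is exactly what the paper's remark that ``it does not make a difference which vertex of $X$ is not deleted'' anticipates.

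The heart of the matter, and the step I expect to be the main obstacle, is the structural claim that $2 \leq |S| < k$ is impossible. Suppose it were possible. Since $V(C)$ is connected and meets both $X$ and $V(G) \setminus X$, some $s \in S$ has a neighbor $a$ on $C$ with $a \notin X$; as $a$ is adjacent to $s \in X$ and $X$ is homogeneous, $a$ is complete to $X$. Then $a$ is adjacent to every vertex of $S$, and because $C$ is an induced cycle, each such vertex must be one of the two neighbors of $a$ on $C$. This forces $|S| = 2$, say $S = \{s, s'\}$ with $s - a - s'$ a subpath of $C$. Now let $b$ be the second neighbor of $s$ on $C$; then $b \neq a$, and $b \notin X$ (otherwise $b \in S$ would give $b = s'$ and hence the triangle $s\,a\,s'$ in the induced $C$, impossible for $k \geq 5$). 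As before, $b$ is complete to $X$ and therefore adjacent to $s'$, so both neighbors of $s'$ on $C$ are $a$ and $b$, and likewise for $s$. Hence $C = s - a - s' - b - s$ is a $4$-cycle, contradicting that $k$ is odd. This establishes the claim and, with it, the theorem. The delicate point throughout is the repeated use of ``complete to $X$ forces a chord unless the vertex is a cycle-neighbor,'' which is precisely what collapses the configuration down to a $C_4$.
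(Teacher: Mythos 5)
Your proof is correct and takes essentially the same route as the paper: the paper likewise reduces to the case $2 \leq |V(C) \cap X| < |V(C)|$ and dismisses it by noting that $V(C) \cap X$ would be a homogeneous set in the hole $C$, a contradiction. Your collapse-to-$C_4$ argument is exactly the (omitted) proof that a hole of length at least five has no homogeneous set, and your transport argument in the $|S|=1$ case spells out what the paper's parenthetical remark (that it does not matter which vertex of $X$ survives in $G_1(X)$) is relied upon to cover.
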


\begin{proof}
Let $C$ be an odd hole in $G$. We may assume that $V(C) \not \subseteq X$.
If $|V(C) \cap X| \leq 1$, then $G_1(X)$ contains an odd hole. Thus
$|V(C) \cap X| \geq 2$, and $V(C) \setminus X \neq \emptyset$. It follows that
$V(C) \cap X$ is a homogeneous set in $C$, a contradiction. This proves \ref{holeprime}.
\end{proof}

We can now prove \ref{main}, which we restate.

\begin{theorem}
\label{main1}
There is  an algorithm  that tests if an input bull-free graph $G$ contains an odd hole in time $O(|V(G)|^5)$.
\end{theorem}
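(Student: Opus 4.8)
The plan is to reduce, via homogeneous sets, to the anchor-free case that \ref{noanchoralg} already handles, after first disposing of $C_5$ by brute force. Concretely, I would begin by enumerating all $5$-tuples of vertices of $G$ and checking, in constant time each, whether any of them induces a $C_5$. This costs $O(|V(G)|^5)$. Since $C_5$ is an odd hole, if such a tuple exists the algorithm immediately reports that $G$ contains an odd hole. Otherwise $G$ is bull-free and $C_5$-free, and it remains only to test \emph{this} graph for an odd hole; for the final bound to be $O(|V(G)|^5)$ it suffices to do so in time $O(|V(G)|^4)$.

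For the $C_5$-free bull-free graph I would recurse on homogeneous sets through \ref{holeprime}. If $G$ has a homogeneous set $X$, then $G$ contains an odd hole if and only if $G_1(X)$ or $G_2(X)$ does: the forward implication is exactly \ref{holeprime}, and the reverse is immediate since $G_1(X)$ and $G_2(X)$ are both induced subgraphs of $G$. Each of $G_1(X),G_2(X)$ has strictly fewer vertices than $G$ and is again bull-free and $C_5$-free, so one recurses. The recursion bottoms out at graphs with no homogeneous set, and for such a graph the contrapositive of \ref{hset} shows it contains no anchor, so \ref{noanchoralg} decides the existence of an odd hole in it.

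To organize this recursion and control its cost, I would compute the modular decomposition tree of $G$ once (in polynomial time, which is well within budget), and observe that the graphs appearing as base cases are precisely the quotient graphs at the nodes of the tree. Each quotient is an induced subgraph of $G$ --- take one representative from each child module --- hence bull-free and $C_5$-free; at a prime node it has no homogeneous set, so \ref{noanchoralg} applies, while at a series or parallel node it is complete or edgeless and therefore has no hole at all. Iterating the equivalence of the previous paragraph across the tree shows that $G$ contains an odd hole if and only if some prime quotient does.

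The main obstacle is the running-time accounting rather than any new graph theory. The key point is that the total size of all the quotient graphs is $O(|V(G)|)$, since the number of vertices of the quotient at a node equals its number of children and the tree has $O(|V(G)|)$ non-root nodes. Writing $n_v$ for the size of the quotient at node $v$, the cost of running \ref{noanchoralg} on every prime quotient is therefore $\sum_v O(n_v^4) \le O(|V(G)|^3)\sum_v n_v = O(|V(G)|^4)$. Adding the $O(|V(G)|^5)$ spent enumerating $5$-tuples gives the claimed bound. What needs care is verifying that iterating \ref{holeprime} along the modular decomposition really reduces the problem to the quotients exactly as stated, and that the homogeneous-set decomposition is carried out so that this linear bound on the total quotient size holds.
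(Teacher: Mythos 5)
Your proposal is correct, and at its core it is the same reduction as the paper's: dispose of $C_5$ by enumerating $5$-tuples, then use homogeneous sets to reduce the problem to graphs that have none, which by (the contrapositive of) \ref{hset} contain no anchor and are therefore handled by \ref{noanchoralg}. The difference is in how the decomposition is organized. The paper never builds the modular decomposition tree: it finds a single homogeneous set $X$, splits $G$ into $G_1(X)$ and $G_2(X)$, and recurses; since $|V(G_1(X))|+|V(G_2(X))|=|V(G)|+1$, the recursion tree has $O(|V(G)|)$ leaves, each costing $O(|V(G)|^4)$, so this phase costs $O(|V(G)|^5)$ --- cruder than your bound, but sufficient because the $C_5$ enumeration already dominates. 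Your one-shot modular decomposition, running \ref{noanchoralg} only on prime quotients, sharpens the post-enumeration phase to $O(|V(G)|^4)$ via $\sum_v n_v = O(|V(G)|)$; what it costs you is an extra lemma that the paper does not need in this global form, namely that $G$ has an odd hole if and only if some prime quotient does. That step, which you rightly flag as the point needing care, is indeed fine and is proved by exactly the argument of \ref{holeprime}: an odd hole $C$ has length at least $5$ and hence admits no homogeneous set, so every module of $G$ meets $V(C)$ in at most one vertex unless it contains $V(C)$; taking the deepest node of the tree whose vertex set contains $V(C)$, the hole maps isomorphically into that node's quotient, and that node must be prime since complete and edgeless quotients contain no holes at all. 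One small inaccuracy worth correcting: the base cases of the homogeneous-set recursion are \emph{not} ``precisely the quotient graphs at the nodes of the tree'' (a series node with quotient $K_m$, for instance, would be shredded by repeated splitting into copies of $K_2$, since $K_m$ itself still has homogeneous sets); nothing in your algorithm depends on this identification, however, because you run \ref{noanchoralg} on the quotients directly rather than on the recursion's leaves.
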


\begin{proof}
Here is the algorithm.
\begin{enumerate}
\item Test if $G$ contains $C_5$ by enumerating all $5$-tuples. If yes, stop and output: {\em ``$G$ contains an odd hole''}.
\item Test if $G$ contains a homogeneous set; and find one if it exists. 
\begin{enumerate}
\item If no homogeneous set exists, run the algorithm of \ref{noanchoralg} on $G$, 
output its output, and stop.
\item Else, let $X$ be the homogenous set that we found; recurse on
$G_1(X)$ and $G_2(X)$.
\end{enumerate}
\end{enumerate}

\noindent{\em Proof of correctness:}
After step~$1$ we may assume that $G$ is $C_5$-free. If $G$ does not admit a homogeneous set decomposition, then $G$ has no anchor, and so step~$2(a)$ 
works correctly. Thus we may assume that  $X$ is a homogeneous set in $G$.
Now both $G_1(X)$ and $G_2(X)$ are induced subgraphs of $G$, and therefore they are both bull-free and $C_5$-free.
By~\ref{holeprime} it is enough to test if $G_1(X)$ and
$G_2(X)$ contain an odd hole, which is done in step~$2(b)$. 
This completes the proof of correctness.
\\
\\
{\em Complexity analysis:}
Clearly step~$1$ takes time $O(|V(G)|^5)$. By~\cite{hsetalg} we can find a 
homogeneous set in time $O(|V(G)|)$. By \ref{noanchoralg} 
steps~$2(a)$ takes time $O(|V(G)|^4)$. Since 
$|V(G_1(X))|+|V(G_2(X))|=|V(G)|+1$, it follows that the recursion of 
step~$2(b)$  takes time $O(|V(G)|^5)$. Consequently the algorithm runs in time
$O(|V(G)|^5)$, as claimed.
\end{proof}

\section{Acknowledgment}
This research was conducted during a Graph Theory Workshop  at the McGill 
University Bellairs Research Institute, and we express our gratitude to the institute and to the organizers of the workshop. We are also grateful to Ch\'{i}nh T. Ho\`{a}ng for suggesting this problem to us.

\end{document}